\newtheorem{teorema}{Theorem}
\newtheorem{proposizione}[teorema]{Proposition}
\newtheorem{definizione}[teorema]{Definition}
\theoremstyle{remark}
\title{A dynamical system approach to the Kakutani-Fibonacci sequence}
\author{ {\it Ingrid Carbone,  Maria Rita Iac\`{o}, Aljo\v{s}a Vol\v{c}i\v{c}}}
\date{}
\begin{document}

\maketitle

\begin{abstract}
 In this paper we consider the sequence of Kakutani's $\alpha$-refinements  corresponding to the inverse of golden ratio (which we call {\it Kakutani-Fibonacci sequence of partitions}) and associate to it an ergodic interval exchange  (which we call {\it Kakutani-Fibonacci transformation}) using the ``cutting-stacking" technique. We prove that the orbit of the origin under this map coincides with a low discrepancy sequence (which we call {\it Kakutani-Fibonacci sequence of points}), which has been also considered by other  authors. 

\end{abstract}

\smallskip
 \noindent \textbf{Keywords} Uniform distribution, Kakutani-Fibonacci sequence,   ergodic theory, interval exchange, discrepancy.\\

\noindent \textbf{Mathematics Subject Classification} 11K31, 47A35, 37A05, 11K38, 11K45
\section{Introduction and preliminaries}

In this paper we study with a dynamical system approach a sequence of points which arises from a beautiful geometric construction due to Kakutani \cite{Kakutani}: the $\alpha$-splitting procedure. 
He proved the uniform distribution of successive $\alpha$-refinements using ergodic theory methods.
It has been proved in \cite{Carbone} that when $\alpha$ is the inverse of the golden ratio, the sequence of $\alpha$-refinements (called {\it Kakutani-Fibonacci sequence of partitions} for reasons which will be clear later) has {\it low discrepancy}.

In \cite{Carbone} and \cite{Carbone2} the first author proposed two algorithms which associate to this sequence of $\alpha$-refinements a sequence of points (called {\it Kakutani-Fibonacci sequence of points}) which has {\it low discrepancy}, too.

In this section we present some basic definitions on uniform distribution, discrepancy,  ergodic theory and the well-known Birkhoff's Theorem. We also recall the definition of the {\it Kakutani-Fibonacci sequence}, with the addition of some properties, the most important of which being Theorem \ref{5}.  

In Section 2 we introduce an interval exchange $T$ associated to the cutting-stacking procedure for this sequence and we prove that this transformation is ergodic (Theorem \ref{ergodicity}). By means of this result and Bikhoff's Theorem, we conclude that the orbit of almost every $x\in [0,1[$ is a uniformly distributed sequence of points (Theorem \ref{b-k-f}). 
With the last results (Theorem \ref{15}) we obtain the Kakutani-Fibonacci sequence as the orbit of the origin under $T$.
In a way, our results parallel Lambert's observation concerning the von Neumann-Kakutani mapping and the van der Corput sequence \cite{Lambert}. 

Both sections are endowed by figures which should be of some help for the reader.  

\begin{definizione}\label{1}
\rm{Let $\alpha \in\ ]0,1[$ be a real number and $\pi$ be any partition of $[0,1[\ $. Kakutani's  $\alpha$-refinement of $\pi$, denoted by $\alpha\pi$, is obtained by splitting all the intervals of $\pi$ having maximal length into subintervals of lengths proportional to $\alpha$ and $1-\alpha$, respectively.\\
The sequence of partitions $\{\alpha^{n}\omega\}_{n \ge 1}$ obtained by successive $\alpha$-refinements of the trivial partition $\omega = \{[0, 1[\}$ is called the {\it Kakutani $\alpha$-sequence}.}
\end{definizione}

\rm{\begin{definizione}  \label{2}
 {\rm   Let $\{\pi_n\}_{n \ge 1}$ be a sequence of partitions of $[0,1[$, represented by $\pi_n=\big([y_{j}^{(n)},y_{j+1}^{(n)}[\ :1\leq j\leq t_n\big)$. 
 The {\it discrepancy} of  $\{\pi_n\}_{n \ge 1}$ is defined by
\begin{equation*} D(\pi_n) = \sup_{0 \le a < b \le 1}  \left | \frac {1} {{t_n}} \sum _{j=1}^{{t_n}} \chi _{[a, \,b[} (y_j^{(n)}) - (b-a)\right | .\end{equation*}

\noindent We say that $\{\pi_n\}_{n\ge 1}$ is \emph{uniformly distributed (u.d.)} if $D(\pi_n)\rightarrow 0$ when $n \rightarrow \infty$.

\noindent If there exists a constant $C>0$ such that $ {t_n}\,D(\pi_n)\le C\,$ for any $n$, we say that $\{\pi_n\}_{n \ge 1}$ has {\it low discrepancy}.
}
\end{definizione}

\begin{definizione} \label{3} 
 {\rm  The {\it discrepancy} of  the first $N$ points of a sequence  $X=\{x_n\}_{n \ge 1}$ in $[0, 1[$ is defined by
\begin{equation*} D_N(X) = \sup_{0 \le a < b \le1}  \left | \frac {1} {N} \sum _{j=1}^{N} \chi _{[a,\, b[} (x_j) - (b-a)\right |. \end{equation*}

\noindent We say that $X$ is \emph{uniformly distributed (u.d.)} if $D_N(X)\rightarrow 0$ when $n \rightarrow \infty$.

\noindent If there exists a constant $C>0$ such that $ ND_N(X)\le C\, \log N$ for any $N$, we say that $X$ has {\it low discrepancy}.
}
\end{definizione}

The following theorem shows the important role of u.d. sequences of points in Quasi-Monte Carlo methods.

\begin{teorema}\label{4}
\rm{A sequence of points $\{x_n\}_{n \ge 1}$ in $[0,1[$ is   \emph{uniformly distributed (u.d.)} if and only if for any continuous function $f$ on $[0,1]$ it is 
$$\lim_{N\to\infty}\frac{1}{N}\sum_{j=1}^{N}f(x_j)=\int_0^{1}f(t) dt \ .$$}
\end{teorema}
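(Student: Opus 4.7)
The plan is to prove the two implications separately, using in both directions an approximation argument that bridges characteristic functions of subintervals (which control the discrepancy) and continuous functions (which appear in the integral).

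For the forward direction, assume $\{x_n\}_{n\ge 1}$ is uniformly distributed. First I would observe that the defining inequality in Definition \ref{3}, applied with arbitrary $0\le a<b\le 1$, is exactly the statement that
\[\lim_{N\to\infty}\frac{1}{N}\sum_{j=1}^{N}\chi_{[a,b[}(x_j)=\int_0^1\chi_{[a,b[}(t)\,dt,\]
so the identity holds for every characteristic function of a subinterval, and hence by linearity for every step function of the form $\sum_{i=1}^k c_i\chi_{[a_i,b_i[}$. Given a continuous $f$ on $[0,1]$ and $\varepsilon>0$, uniform continuity lets me choose a step function $g$ with $\|f-g\|_\infty<\varepsilon$. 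Then
\[\Bigl|\tfrac{1}{N}\sum_{j=1}^{N}f(x_j)-\int_0^1 f\,dt\Bigr|\le 2\varepsilon+\Bigl|\tfrac{1}{N}\sum_{j=1}^{N}g(x_j)-\int_0^1 g\,dt\Bigr|,\]
and the last term tends to $0$ by the step-function case, so letting $N\to\infty$ and then $\varepsilon\to 0$ gives the claim.

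For the converse, assume the integral identity holds for every continuous $f$. I would fix $0\le a<b\le 1$ and, for $\varepsilon>0$, sandwich the characteristic function between two continuous functions $g_\varepsilon\le \chi_{[a,b[}\le h_\varepsilon$, each affine on two short intervals of length $\varepsilon$ near $a$ and $b$, so that $\int_0^1(h_\varepsilon-g_\varepsilon)\,dt\le 2\varepsilon$. Using the hypothesis applied to $g_\varepsilon$ and $h_\varepsilon$, I get
\[\int_0^1 g_\varepsilon\,dt\le\liminf_{N\to\infty}\tfrac{1}{N}\sum_{j=1}^N\chi_{[a,b[}(x_j)\le\limsup_{N\to\infty}\tfrac{1}{N}\sum_{j=1}^N\chi_{[a,b[}(x_j)\le\int_0^1 h_\varepsilon\,dt,\]
which forces the Cesàro average to converge to $b-a$ as $\varepsilon\to 0$. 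To pass from pointwise convergence at each fixed $[a,b[$ to the uniform supremum in the definition of $D_N(X)$, I would exploit a standard compactness/tiling argument: partition $[0,1]$ into $M$ equal subintervals, control the deviations on this finite family uniformly for $N\ge N_0(M)$, and bound the discrepancy on a general $[a,b[$ by the best sandwiching with endpoints on the grid, up to an error $O(1/M)$.

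The main obstacle is not any single step but the handling of the discontinuity of $\chi_{[a,b[}$, which is what prevents a direct implication in either direction; the key device in both cases is the two-sided continuous approximation, and in the converse the additional uniformity in $(a,b)$ needed to obtain $D_N(X)\to 0$ rather than just pointwise convergence of the counting averages.
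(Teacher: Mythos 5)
The paper states this classical result (Weyl's theorem on uniform distribution) without giving any proof, so there is no in-paper argument to compare against; your proposal is the standard textbook proof and is correct. Both halves are sound: the forward direction via uniform approximation of $f$ by step functions (using that $D_N(X)\to 0$ already controls every $\chi_{[a,b[}$), and the converse via the two-sided continuous sandwich of $\chi_{[a,b[}$ followed by the grid argument that upgrades pointwise convergence of the counting averages to the uniform statement $D_N(X)\to 0$. The only point worth making explicit is the degenerate case in the tiling step, where the chosen grid points satisfy $a_+>b_-$; there the interval $[a,b[$ has length at most $2/M$ and the deviation is trivially $O(1/M)$, so the bound survives.
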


 Kakutani  proved that the sequence of partitions $\{\alpha^{n}\omega\}_{n \ge 1}$ is uniformly distributed \cite{Kakutani}. 
 
 This result got a considerable attention in the subsequent decades and in the recent years  many extensions of Kakutani's idea have been considered.

The concept of u.d. sequences of partitions has been extended to compact metric probability spaces \cite{Chersi-Volcic} and to fractals \cite{I-V}. Recently, the first and third authors showed how to rearrange a dense sequence of partitions in order to obtain a u.d. one \cite{Carbone_Volcic2}.

The third author \cite{Volcic} extended Kakutani's $\alpha$-refinement by introducing the concept of {\it $\rho$-refinement} of a partition $\pi$ of $[0,1[$. It is obtained by splitting the longest intervals of $\pi$ into a finite number of parts homothetically to a fixed finite partition $\rho$ of $[0,1[$. It has been proved that the sequence of subsequent $\rho$-refinements $\{\rho^{n}\omega\}_{n \ge 1}$ is u.d. for every $\rho$.

Kakutani's splitting procedure has also been extended to the multidimensional case in \cite{Carbone_Volcic}.

The general case in which the successive $\rho$-refinements are applied to an arbitrary non trivial partition $\pi$ has been studied in \cite{Aistleitner_Hofer}. The authors give necessary and sufficient conditions on $\pi$ and $\rho$ which assure that  $\{\rho^{n}\pi\}_{n \ge 1}$ is u.d..

Drmota and Infusino \cite{Drmota_Infusino} derived bounds for the discrepancy of the sequences of $\rho$-refinements  $\{\rho^{n}\omega\}_{n \ge 1}$ considering a new approach based on the Khodak algorithm for parsing trees.

 A specific countable class of sequences of $\rho$-refinements,  called $LS$-sequen\-ces of partitions, has been studied in \cite{Carbone}.
 In that paper it was also presented an explicit algorithm which orders the points determining the $LS$-sequences of partitions to obtain a u.d. sequence of points, called $LS$-sequence of points, providing estimates for the discrepancy. 
 
 The paper \cite{Carbone2} contains  a new explicit algorithm \lq\lq \`{a} la van der Corput\rq\rq\ to generate these sequences of points using the representation of natural numbers in base $L+S$.  This algorithm is explicit and allows us to compute directly the points of each $LS$-sequence.
 
 Some interesting extensions of $LS$-sequences of points to dimension two have been first introduced in \cite{tesi} and presented in \cite{C-I-V}.
 
 \bigskip
The sequence of points we want to study in this paper is associated to Kakutani's $\alpha$-sequence $\{\alpha^n \omega\}_{n\ge 1}$, where $\alpha=\frac{\sqrt{5}-1}{2}$ is the inverse of the golden ratio. 
It is easy to see that, since $\alpha^2+\alpha=1$, each partition $\alpha^n \omega$ is formed only by intervals of length $\alpha^n$ (long) and $\alpha^{n+1}$ (short).

If we denote by $\{ l_n\}_{n \ge 1}$ and $\{ s_n\}_{n \ge 1}$ the number of long and short intervals, respectively, and by $\{ t_n \}_{n \ge 1}$ the total number of intervals of the $n$-th partition, they satisfy the difference equation 
\begin{equation*}a_{n+2}=a_{n+1}+a_n
 \end{equation*}
 with initial conditions $l_0=1$ and $l_1=1$, $s_0=0$ and $s_1=1$ and, respectively, $t_0=1$ and $t_1=2$. 
 Therefore $\{ t_n \}_{n \ge 1}$ is the Fibonacci sequence. 
 For this reason, in \cite{Carbone} this sequence has been called the {\it Kakutani-Fibonacci sequence of partitions}, and it was also proved the following

 \begin{teorema} The Kakutani-Fibonacci sequence of partitions $ \{\alpha^n \omega\}_{n\ge 1}$ has low discrepancy.
 
 \end{teorema}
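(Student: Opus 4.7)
The plan is to bound $D(\alpha^n \omega)$ by an absolute multiple of $\alpha^n$; since $t_n = F_{n+2}$ grows like $\alpha^{-n}$ by Binet's formula (so $t_n\alpha^n \to \phi^2/\sqrt{5}$), this immediately yields a uniform bound on $t_n D(\alpha^n\omega)$, which is exactly the definition of low discrepancy. In this sense all the work goes into an appropriately sharp pointwise estimate.

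The first step is a reduction to a pointwise estimate on the endpoints. Using the standard relation $D(\pi_n) \le 2 D^*(\pi_n)$ between discrepancy and star-discrepancy, and noting that the star-discrepancy is attained (up to an error of order $1/t_n + \alpha^n$) at the partition points themselves, it suffices to prove
$$\max_{0 \le k \le t_n-1} \left| y_{k+1}^{(n)} - \frac{k}{t_n} \right| = O(\alpha^n).$$
By induction on $n$, the partition $\alpha^n\omega$ consists only of long intervals of length $\alpha^n$ and short intervals of length $\alpha^{n+1}$ (this uses $1 - \alpha = \alpha^2$, so that splitting a long interval in ratio $\alpha:(1-\alpha)$ produces the next-level long and short). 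Writing $L_k$ for the number of long intervals among the first $k$ intervals of $\alpha^n\omega$ and using the identity $1 = l_n\alpha^n + s_n\alpha^{n+1}$, a short manipulation gives the key formula
$$y_{k+1}^{(n)} - \frac{k}{t_n} = \left( L_k - k\,\frac{l_n}{t_n}\right)\alpha^{n+2},$$
so the whole problem collapses to showing that $|L_k - k\,l_n/t_n|$ is bounded by a constant independent of $k$ and $n$.

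This last bound is the crux of the proof and I expect it to be the main technical obstacle. The strategy is to recognise the long/short pattern of $\alpha^n\omega$ as a finite Fibonacci word. An induction on $n$ shows that when one passes from $\alpha^n\omega$ to $\alpha^{n+1}\omega$ every long interval is split into a new long followed by a new short, while every short interval is left alone and becomes a new long; this is precisely the Fibonacci substitution $\sigma(L) = LS$, $\sigma(S) = L$ applied letterwise to the L/S word at level $n$. Since $\omega$ corresponds to the single letter $L$, the long/short word of $\alpha^n\omega$ is $\sigma^n(L)$, a prefix of the infinite Fibonacci word, which is a Sturmian sequence of slope $\alpha$. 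The classical balance property of Sturmian words then yields $|L_k - k\alpha| \le 1$ for every $k$; combining this with the elementary Binet-type estimate $|l_n/t_n - \alpha| = O(\alpha^{2n})$ gives $|L_k - k\,l_n/t_n| \le 1 + t_n \cdot O(\alpha^{2n}) = O(1)$ uniformly in $k \le t_n$ and $n$. Substituting back into the key identity produces $D(\alpha^n\omega) = O(\alpha^n)$, and multiplying by $t_n$ completes the proof.
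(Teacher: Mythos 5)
The paper itself contains no proof of this theorem: it is imported from the reference [Carbone], where it is obtained within the more general framework of $LS$-sequences of partitions, so there is no internal argument to compare yours against. Judged on its own, your proof is correct and essentially complete. The reduction to the pointwise estimate $\max_{0\le k\le t_n-1}\bigl|y_{k+1}^{(n)}-k/t_n\bigr|=O(\alpha^n)$ via the sorted-points formula for the star discrepancy is sound; the key identity $y_{k+1}^{(n)}-k/t_n=\bigl(L_k-k\,l_n/t_n\bigr)\alpha^{n+2}$ checks out, since $y_{k+1}^{(n)}=k\alpha^{n+1}+L_k\alpha^{n+2}$ (using $\alpha^n-\alpha^{n+1}=\alpha^{n+2}$) and $1=t_n\alpha^{n+1}+l_n\alpha^{n+2}$; the identification of the long/short pattern of $\alpha^n\omega$ with $\sigma^n(L)$ for the Fibonacci substitution $\sigma(L)=LS$, $\sigma(S)=L$ is exactly right (a short interval of level $n$ has length $\alpha^{n+1}$ and is precisely a long interval of level $n+1$); and the combination of the Sturmian balance bound $|L_k-k\alpha|\le 1$ with $|l_n/t_n-\alpha|=O(\alpha^{2n})$ (which follows from $l_n=t_{n-1}$ and Binet) does close the estimate, giving $t_nD(\alpha^n\omega)=O(1)$ as required. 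The one ingredient you invoke without proof is the balance property of prefixes of the Fibonacci word; this is classical (Morse--Hedlund, or Lothaire), but if you want the argument self-contained you could replace it by a direct induction on the substitution structure showing $|L_k-k\alpha|\le C$. Your route is tailored to the golden-ratio case, exploiting $\alpha^2+\alpha=1$ and the Sturmian combinatorics in an essential way, whereas the cited proof covers a whole countable family of refinement procedures; what the specialization buys you is a short, transparent argument in which all the discrepancy analysis collapses onto a single classical fact about the Fibonacci word.
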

  
 In \cite{Carbone} and \cite{Carbone2} the first author proposed two algorithms which order in a natural way the left endpoints of  $ \{\alpha^n \omega\}_{n\ge 1}$ and associate to it a sequence of points called the {\it Kakutani-Fibonacci sequence of points} we will denote by $\{\xi_n\}_{n \ge 1}$. 
 
  The sequence $\{\xi_n\}_{n \ge 1}$  can be  constructed inductively as follows. 
  
  Begin with the first block
  \begin{equation*} \Lambda_1=( \xi_1, \xi_2)= (0, \alpha ).
 \end{equation*}
 
 \noindent If we denote by  $\Lambda_n=(\xi_1,\dots, \xi_{t_n})$ the set of the $t_n$ points of $\alpha^n\omega$ ordered in the appropriate way, the $t_{n+1}=t_n+l_n$ points of $\alpha^{n+1}\omega$ are defined by
\begin{equation}\label{1}\Lambda_{n+1}=
\big(\xi_1^,\dots,\xi_{t_n}, \phi_{n+1}(\xi_1),\dots ,\phi_{n+1}(\xi_{l_n})\big)\ ,\end{equation}
where $\phi_{n+1}(x)=x+\alpha^{n+1}$.

\bigskip
In Figure 1 we  see  the first 8 elements of the sequence $\{\xi_n\}_{n \ge 1}$.

\begin{figure}[h!]

\begin{center}
\begin{tikzpicture}[scale=13]
\draw (0, 0) -- (1,0);
\draw (0,-0.01) node[below, black]{0} -- (0,0.01) node[above, black]{$\xi_1$};
\draw (1,-0.01) node[below, black]{1} -- (1,0.01);
\draw (0.61803,-0.01) node[below, black]{\begin{scriptsize} $\alpha$\end{scriptsize}}-- (0.61803,0.01) node[above, black]{$\xi_2$};
\draw (0.38196,-0.01) node[below, black]{\begin{scriptsize}$\alpha^{2}$\end{scriptsize}}-- (0.38196,0.01) node[above, black]{$\xi_3$};
\draw (0.23606,-0.01) node[below, black]{\begin{scriptsize}$\alpha^{3}$\end{scriptsize}}-- (0.23606,0.01) node[above, black]{$\xi_4$};
\draw (0.85410,-0.01) node[below, black]{\begin{scriptsize}$\alpha +\alpha^{3}$\end{scriptsize}}-- (0.85410,0.01) node[above, black]{$\xi_5$};
\draw (0.14589,-0.01) node[below, black]{\begin{scriptsize}$\alpha^{4}$\end{scriptsize}}-- (0.14589,0.01) node[above, black]{$\xi_6$};
\draw (0.76393,-0.01) node[below, black]{\begin{scriptsize}$\alpha +\alpha^{4}$\end{scriptsize}}-- (0.76393,0.01) node[above, black]{$\xi_7$};
\draw (0.52786,-0.01) node[below, black]{\begin{scriptsize}$\alpha^2 +\alpha^{4}$\end{scriptsize}}-- (0.52786,0.01) node[above, black]{$\xi_8$};
\end{tikzpicture}
\end{center}
\caption{First 8 points of the Kakutani-Fibonacci sequence $\{\xi_n\}_{n \ge 1}$}
\end{figure}
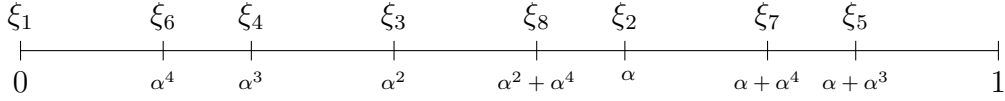

It is interesting to notice that the sequence $\{\xi_n\} _{n \ge 1} $ has been constructed by Ninomiya in \cite{Ninomiya} using a completely different approach, without observing that it is one of the possible reordering of the points of a Kakutani $\alpha$-sequence. 

The next result has been proved independently by Ninomiya  in \cite{Ninomiya} and by the first author in \cite{Carbone} using completely different methods.

\begin{teorema}\label{5}
The Kakutani-Fibonacci sequence of points $\{\xi_n\}_{n \ge 1}$ has low discrepancy.
\end{teorema}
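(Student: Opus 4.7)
The plan is to establish the low-discrepancy bound $N D_N(\{\xi_j\}) \le C \log N$ by strong induction on $N$, leveraging the low-discrepancy estimate already recorded for the partition sequence $\{\alpha^n \omega\}_{n \ge 1}$ together with the explicit recursion defining $\{\xi_j\}$.

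First I would handle the ``Fibonacci milestones'' $N = t_n$. By construction the first $t_n$ terms $\xi_1,\ldots,\xi_{t_n}$ coincide as a set with the left endpoints of $\alpha^n\omega$, hence $D_{t_n}(\{\xi_j\}) = D(\alpha^n\omega)$. The previous theorem then yields a universal constant $C_0$ with $t_n D_{t_n} \le C_0$, much stronger than the required $C\log t_n$.

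For a general index I would write $N = t_n + k$ with $0 \le k \le l_n = t_{n-1}$. The recursion says that $\xi_{t_n+i} = \xi_i + \alpha^{n+1}$ for $1 \le i \le k$. For any interval $[a,b)\subset[0,1)$ the counting function splits as
\[
\sum_{j=1}^N \chi_{[a,b)}(\xi_j) - N(b-a) = \Bigl(\sum_{j=1}^{t_n}\chi_{[a,b)}(\xi_j) - t_n(b-a)\Bigr) + \Bigl(\sum_{i=1}^k \chi_{[a-\alpha^{n+1},\,b-\alpha^{n+1})}(\xi_i) - k(b-a)\Bigr).
\]
The first bracket is absolutely bounded by $C_0$. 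For the second, note that $\xi_1,\ldots,\xi_{t_{n-1}}$ are left endpoints of intervals of $\alpha^{n-1}\omega$ (each of length at least $\alpha^n$), so the translates $\xi_i+\alpha^{n+1}$ still lie in $[0,1)$; only when $a<\alpha^{n+1}$ does the shifted window protrude outside, producing a boundary correction of size at most $k\alpha^{n+1}$. Since $t_{n-1}\alpha^{n+1}$ tends to a finite limit by Binet's formula, this correction is uniformly bounded by some $C_1$. Applying the inductive hypothesis to the length-$k$ initial segment therefore gives
\[
N D_N \le C_0 + C\log k + C_1.
\]

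The induction closes provided $C$ is chosen large enough. The worst case is $k$ of order $t_{n-1}$, in which $\log(N/k)$ remains bounded below by $\log(1/\alpha)>0$, so the additive losses $C_0+C_1$ can be absorbed into the logarithmic gap whenever $C \ge (C_0+C_1)/\log(1/\alpha)$; after checking the anchor cases for small $N$ the induction goes through. The main technical obstacle is precisely this calibration of constants together with the careful handling of the shift-induced boundary term, but the Fibonacci identity $l_n=t_{n-1}$---a direct consequence of the golden-ratio structure---is exactly what keeps the gap between $t_n$ and $t_{n+1}$ large enough for the induction to close.
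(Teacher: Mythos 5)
The paper itself contains no proof of this theorem: it is imported from Ninomiya \cite{Ninomiya} and from \cite{Carbone}, where it is obtained by quite different counting arguments. So there is no internal proof to compare yours against, and it must stand on its own --- which it essentially does. The two pillars are sound. At the milestones $N=t_n$ the points $\xi_1,\dots,\xi_{t_n}$ are, as a set, exactly the left endpoints of $\alpha^n\omega$, and since the discrepancy of a finite point set does not depend on its ordering, the low-discrepancy theorem for the partition sequence gives $t_nD_{t_n}\le C_0$. For $N=t_n+k$ the defining recursion really does give $\xi_{t_n+i}=\xi_i+\alpha^{n+1}$ for $1\le i\le l_n=t_{n-1}$; the shifted window can only protrude below $0$ (never above $1$, because each $\xi_i$ with $i\le t_{n-1}$ is a left endpoint of an interval of $\alpha^{n-1}\omega$ and hence at most $1-\alpha^{n}$, so $\xi_i+\alpha^{n+1}\le 1-\alpha^{n+2}$); and the truncation error is at most $k\alpha^{n+1}\le t_{n-1}\alpha^{n+1}$, which is uniformly bounded by Binet's formula. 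The calibration also closes: $N/k=1+t_n/k\ge 1+t_n/l_n\ge 5/2$ for every admissible $k$, so $\log(N/k)$ is bounded below by a positive constant (your lower bound $\log(1/\alpha)$ is correct, if not optimal), and any $C\ge (C_0+C_1)/\log(5/2)$ absorbs the additive losses.

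One small repair is required. You cannot feed $k=1$ (nor the base case $N=1$) into the inductive hypothesis, since $C\log 1=0$ while $D_1>0$; indeed the paper's Definition of low discrepancy is literally unsatisfiable at $N=1$ for any sequence. Either bound the length-one tail block trivially by $1+\alpha^{n+1}$ and absorb it into the constants, or run the induction with the bound $ND_N\le C(\log N+1)$, which is the standard convention and equivalent to the intended statement for $N\ge 2$. With that adjustment the induction is complete, and you obtain a short, self-contained derivation of the low discrepancy of the point sequence from that of the partition sequence --- a reduction the paper does not carry out.
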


 In the next section we will obtain this sequence using a dynamical system approach.

	This approach to uniform distribution theory goes back to von Neumann and Kakutani (see \cite{Friedman}, \cite{Friedman2}, \cite{Lambert}).
 For a recent survey on this matter see \cite{grahellia}.

\begin{definizione}\label{6}
\rm{Let $X$ be a  set and $T:X\rightarrow X$ be a map. Given $x\in X$, the sequence $x, T(x), T^{2}(x),\ldots$ is called the \emph{orbit} of $x$ in $X$.}
\end{definizione}

\begin{definizione}\label{7}
\rm{
A map $T:[0,1[ \ \rightarrow [0,1[$ is said to be an \emph{interval exchange} on $[0,1[$ if there exist a finite or countable family $\mathcal{F}$ of non empty subintervals $I_k=[a_k,b_k[$ of $[0,1[$ and a family of corresponding real numbers $c_ k$ with $1\leq k \leq card(\mathcal{F})$, such that:
\begin{itemize}
\item [(i)] $I_h \cap I_k = \emptyset$ and $(I_h+c_ h)\cap (I_h+c_ k)=\emptyset$ if $h\neq k$,
\item[(ii)] $T(x)=x+c_ k$ if $x\in I_k$,
\item[(iii)] $\lambda(\cup_{k\geq 0}I_k)=\lambda(\cup_{k\geq 0}(I_k+c_ k))=1$.
\end{itemize}
}
\end{definizione}

\bigskip
It is easy to see that an interval exchange is a map of $[0,1[$ into itself which is one-to-one, preserves the Lebesgue measure $\lambda$ and is continuous $\lambda$-almost everywhere.

  A special role is played by the \emph{ergodic} transformations (see \cite{Friedman}, for instance).

\begin{definizione}
\rm{Let $(X,\mathcal{A},\mu)$ be a probability space. A  measurable transformation  $T:X\rightarrow X$   is said \emph{ergodic} if  for every $A\in\mathcal{A}$ such that 
$T^{-1}(A) = A$, either $\mu(A)=0$ or $ \mu(A)=1$.} 
\end{definizione}

We will use later an equivalent property: $T$ is ergodic if and only if for any measurable set $B$ such that $\mu(B)>0$, the set $B^T=\bigcup_{i=- \infty}^{+ \infty}T^i(B)$ has measure $1$.

We will also assume that the ergodic transformation $T$ is measure preserving, as most (but not all) authors do.

The system $(X,\mathcal{A},\mu, T)$  
  is called a \emph{measure theoretical dynamical system}, or \emph{dynamical system}, for short.
 If $T$ is ergodic, the system is called {\it ergodic}.

\bigskip
The link between dynamical systems and uniform distribution is given by the following  

\begin{teorema}[Birkhoff's Theorem]\label{B}
Let $(X,\mathcal{A},\mu, T)$ be a measure theoretical dynamical system. Then, for every $f \in \mathcal{L}^1(X)$ $$\lim_{N\to\infty}\frac{1}{N}\sum_{j=0}^{N-1}f(T^{j}x)$$ exists for $\mu$-almost every $x\in X$ (here $T^0x=x$).

\noindent If $T:X\rightarrow X$ is ergodic, then for every $f\in \mathcal{L}^{1}(X)$ we have $$\lim_{N\to\infty}\frac{1}{N}\sum_{j=0}^{N-1}f(T^{j}x)=\int_Xf(x)d\mu(x)\ ,$$ for $\mu$-almost every $x\in X$.
\end{teorema}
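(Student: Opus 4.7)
The plan is to prove Birkhoff's Theorem by the classical route: first establish existence of the time average almost everywhere for a general measure-preserving $T$, then identify the limit as the space average under the extra hypothesis of ergodicity. Since this is a well-known classical result, I would treat the proof as a reconstruction of the standard argument rather than a new contribution, and in the paper itself I would likely simply cite a reference such as Walters or Petersen; the sketch below describes what the reconstruction looks like.

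For the first (individual) part, I would fix $f\in\mathcal{L}^1(X)$, set $S_N f(x)=\sum_{j=0}^{N-1}f(T^j x)$, and define
\[
\overline{f}(x)=\limsup_{N\to\infty}\frac{1}{N}S_N f(x),\qquad \underline{f}(x)=\liminf_{N\to\infty}\frac{1}{N}S_N f(x).
\]
Both functions are $T$-invariant because $\frac{1}{N}S_N f(Tx)=\frac{1}{N}S_N f(x)+\frac{1}{N}(f(T^N x)-f(x))$ and $\frac{1}{N}f(T^N x)\to 0$ in measure (with more care, a.e.\ via Borel--Cantelli on a subsequence, using $f\in \mathcal{L}^1$). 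The goal is then to show $\overline{f}=\underline{f}$ $\mu$-a.e. The central tool for this is the Maximal Ergodic Theorem: if $E=\{x:\sup_{N\ge 1}S_N f(x)>0\}$, then $\int_E f\,d\mu\ge 0$. I would prove this via the short combinatorial argument of Garsia, considering $F_N=\max(0,S_1 f,\dots,S_N f)$ and exploiting the identity $f+F_N\circ T\ge S_{k}f$ on $\{F_N>0\}$ together with measure preservation.

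With the maximal inequality in hand, I would apply it to $(f-a)\chi_A$, where $A=\{\overline{f}>a\}$ is $T$-invariant for each rational $a$, to obtain $\int_A f\,d\mu\ge a\mu(A)$. A symmetric argument applied to $-f$ gives $\int_A f\,d\mu\le b\mu(A)$ on $\{\underline{f}<b\}$, so the set $\{\underline{f}<b<a<\overline{f}\}$ must have measure zero for every pair of rationals $b<a$. Taking the union over such pairs yields $\overline{f}=\underline{f}$ a.e., which gives a.e.\ existence of the limit. For the second (ergodic) part, the limit $f^\ast$ is $T$-invariant and, by ergodicity, constant $\mu$-a.e.; Fatou's lemma, combined with $\int\frac{1}{N}S_N f\,d\mu=\int f\,d\mu$, identifies this constant as $\int_X f\,d\mu$.

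The main obstacle in this scheme is the maximal ergodic theorem: every modern proof of Birkhoff's theorem bottlenecks there, whether one uses Garsia's identity, Kamae's nonstandard-analysis proof, or Katznelson--Weiss's covering argument. Everything else (invariance of $\overline{f}$, squeezing via rationals, identification of the constant) is routine once the maximal inequality is available. Since for the purposes of this paper the theorem is invoked only as a black box to translate ergodicity of the interval exchange $T$ into uniform distribution of orbits, I would in the final version simply state Theorem~\ref{B} with a citation rather than reproduce this classical argument.
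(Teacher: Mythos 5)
The paper does not prove this statement at all: Birkhoff's theorem is quoted as classical background (the ergodic-theory references \cite{Friedman}, \cite{grahellia} are the implicit source) and is used later only as a black box to pass from ergodicity of the Kakutani--Fibonacci transformation to uniform distribution of almost every orbit. So there is no in-paper argument to compare against, and your own closing recommendation --- state the theorem with a citation --- is exactly what the authors do. Your reconstruction of the standard proof is the correct classical route: invariance of $\overline{f}$ and $\underline{f}$, the maximal ergodic theorem via Garsia's inequality $f\ge F_N-F_N\circ T$ on $\{F_N>0\}$, the squeeze over rational pairs $b<a$ applied to the invariant sets $\{\overline{f}>a\}$ and $\{\underline{f}<b\}$, and constancy of the limit under ergodicity. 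Two small points of care: the invariance of $\overline{f}$ is cleaner via $S_Nf(Tx)=S_{N+1}f(x)-f(x)$, which needs only $|f(x)|<\infty$ a.e.\ rather than the Borel--Cantelli control of $f(T^Nx)/N$ you gesture at; and Fatou alone identifies the constant only as $\le\int_X f\,d\mu$ for $f\ge 0$ --- to get equality for general $f\in\mathcal{L}^1$ one should either first treat bounded $f$ and approximate in $\mathcal{L}^1$, or use the maximal inequality in both directions to show $\int_A f^{\ast}\,d\mu=\int_A f\,d\mu$ on invariant sets, i.e.\ that $f^{\ast}$ is the conditional expectation of $f$ on the invariant $\sigma$-algebra. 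Neither issue affects the validity of the overall scheme.
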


The conclusion of the previous theorem says that the orbit $\{T^{j}x\}_{j \ge 0}$ of $x$ is u.d. for almost every $x\in X$ whenever $T$ is ergodic.

\section{Main results}

We now turn to the main purpose of this paper showing that the Kakutani-Fibonacci sequence can be obtained as the orbit of the origin generated by an interval exchange $T: [0,1[\rightarrow [0,1[$, using the following {\it cutting-stacking} technique (see \cite{Friedman}, \cite{Lambert} and \cite{grahellia}).

\begin{definizione}[{\it Cutting-stacking technique for the Kakutani-Fibonacci sequence}]\label{8}
\rm{For every fixed $n$,  the intervals of the $n$-th Kakutani-Fibonacci partition $\alpha^{n}\omega$ are represented by two columns $C_n=\{L_n,S_n \}$ constructed as follows.

Let us start with two columns $C_1=\{L_1,S_1\}$, where $L_1=[0,\alpha[$, $S_1=[\alpha,1[$. 
 If we divide $L_1$ proportionally to $\alpha$ and $\alpha^2$, we can write $L_1=\{L_1^0, L_1^1\}$, where $L_1^0=[0,\alpha^2[$ and $L_1^1=[\alpha^2,\alpha[$.
Now we stack the interval $S_1$ over $L_1^0$ (and use the common notation $L_1^0 *S_1$) as they have the same length, called {\it width} and denoted by $w(L_1^0)=w(S_1)=\alpha^2$. So we get two new columns $C_2=\{L_2,S_2\}$, 
 where $L_2=L_1^0 *S_1$ and $S_2=L_1^1$. We denote by $b(L_2)=[0, \alpha^2[$ the {\it bottom} of $L_2$ and by $h(L_2)=2=l_2$ its {\it height} (or the number of intervals of which $L_2$ is made).

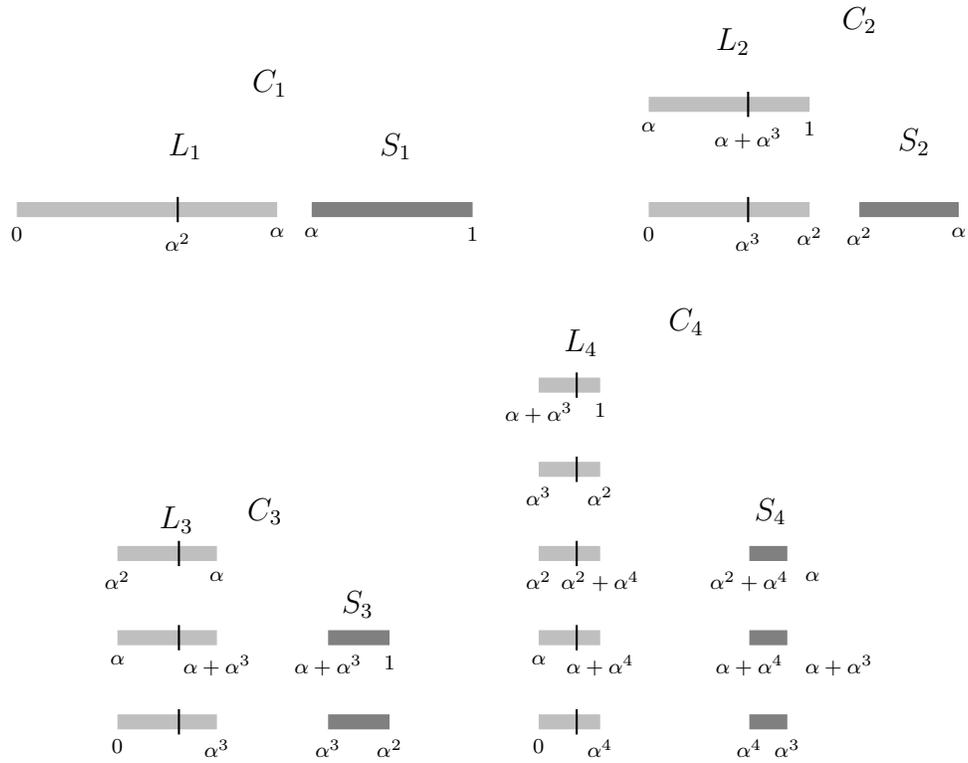
\begin{figure}[h!]
\begin{center}
\begin{tikzpicture}[scale=5.6]
\draw [lightgray, line width=0.2cm] (0,0) node[below, black]{\scriptsize $0$} -- (0.61803,0) node[below, black]{\scriptsize $\alpha$};
\node at (0.6,0.3) {$C_1$};
\node at (0.4,0.15) {$L_1$};
\node at (0.9,0.15) {$S_1$};
\draw [gray, line width=0.2cm] (0.7,0) node[below, black]{\scriptsize $\alpha$} -- (1.08196,0) node[below, black]{\scriptsize $1$};
\draw [thick] (0.38196,-0.03) node[below, black]{\scriptsize $\alpha^{2}$} -- (0.38196, 0.03);

\draw [lightgray, line width=0.2cm] (1.5,0) node[below, black]{\scriptsize $0$} -- (1.88196,0) node[below, black]{\scriptsize $\alpha^{2}$};
\draw [lightgray, line width=0.2cm] (1.5,0.25) node[below, black]{\scriptsize $\alpha$} -- (1.88196,0.25) node[below, black]{\scriptsize $1$};
\draw [gray, line width=0.2cm] (2,0) node[below, black]{\scriptsize $\alpha^{2}$} -- (2.23606,0) node[below, black]{\scriptsize $\alpha$};
\draw [thick] (1.73606,-0.03) node[below, black]{\scriptsize $\alpha^{3}$} -- (1.73606, 0.03);

\draw [thick,black] (1.73606, 0.22) node[below, black]{\scriptsize $\alpha+\alpha^{3}$} -- (1.73606, 0.28);

\node at (2,0.45) {$C_2$};
\node at (1.7,0.4) {$L_2$};
\node at (2.13,0.16) {$S_2$};

\end{tikzpicture}
\end{center}

\begin{center}
\begin{tikzpicture}[scale=5.6]

\draw [lightgray, line width=0.2cm] (0,0) node[below, black]{\scriptsize $0$} -- (0.23606,0) node[below, black]{\scriptsize $\alpha^{3}$};
\draw [thick,black] (0.14589, -0.03) -- (0.14589, 0.03);
\draw [lightgray, line width=0.2cm] (0,0.2) node[below, black]{\scriptsize $\alpha$} -- (0.23606,0.2) node[below, black]{\scriptsize $\alpha+\alpha^{3}$};
\draw [thick,black] (0.14589, 0.17) -- (0.14589, 0.23);
\draw [lightgray, line width=0.2cm] (0,0.4) node[below, black]{\scriptsize $\alpha^{2}$} -- (0.23606,0.4) node[below, black]{\scriptsize $\alpha$};
\draw [thick,black] (0.14589, 0.37) -- (0.14589, 0.43);

\draw [gray, line width=0.2cm] (0.5,0) node[below, black]{\scriptsize $\alpha^{3}$} -- (0.64589,0) node[below, black]{\scriptsize $\alpha^{2}$};
\draw [gray, line width=0.2cm] (0.5,0.2) node[below, black]{\scriptsize $\alpha +\alpha^{3}$} -- (0.64589,0.2) node[below, black]{\scriptsize $1$};

\node at (0.35,0.5) {$C_3$};
\node at (0.14,0.48) {$L_3$};
\node at (0.57, 0.28){$S_3$};

\draw [lightgray, line width=0.2cm] (1,0) node[below, black]{\scriptsize $0$} -- (1.14589,0) node[below, black]{\scriptsize $\alpha^{4}$};
\draw [thick,black] (1.09016, -0.03) -- (1.09016, 0.03);
\draw [lightgray, line width=0.2cm] (1,0.2) node[below, black]{\scriptsize $\alpha$} -- (1.14589,0.2) node[below, black]{\scriptsize $\alpha +\alpha^{4}$};
\draw [thick,black] (1.09016, 0.17) -- (1.09016, 0.23);
\draw [lightgray, line width=0.2cm] (1,0.4) node[below, black]{\scriptsize $\alpha^{2}$} -- (1.14589,0.4) node[below, black]{\scriptsize $\alpha^{2}+\alpha^{4}$};
\draw [thick,black] (1.09016, 0.37) -- (1.09016, 0.43);
\draw [lightgray, line width=0.2cm] (1,0.6) node[below, black]{\scriptsize $\alpha^{3}$} -- (1.14589,0.6) node[below, black]{\scriptsize $\alpha^{2}$};
\draw [thick,black] (1.09016, 0.57) -- (1.09016, 0.63);
\draw [lightgray, line width=0.2cm] (1,0.8) node[below, black]{\scriptsize $\alpha+\alpha^{3}$} -- (1.14589,0.8) node[below, black]{\scriptsize $1$};
\draw [thick,black] (1.09016, 0.77) -- (1.09016, 0.83);

\draw [gray, line width=0.2cm] (1.5,0) node[below, black]{\scriptsize $\alpha^{4}$} -- (1.59016,0) node[below, black]{\scriptsize $\alpha^{3}$};
\draw [gray, line width=0.2cm] (1.5,0.2) node[below, black]{\scriptsize $\alpha +\alpha^{4}$} -- (1.59016,0.2) node[below right, black]{\scriptsize $\alpha +\alpha^{3}$};
\draw [gray, line width=0.2cm] (1.5,0.4) node[below, black]{\scriptsize $\alpha^{2} +\alpha^{4}$} -- (1.59016,0.4) node[below right, black]{\scriptsize $\alpha$};

\node at (1.35,0.95) {$C_4$};
\node at (1.1,0.9) {$L_4$};
\node at (1.55, 0.5){$S_4$};

\end{tikzpicture}

\caption{Partial graph of the cutting-stacking method for $\{\alpha^n\omega\}_{n\ge 1}$}
\end{center}
\end{figure}

 If we continue this way, at the $n$-th step we get two columns denoted by $C_n=\{L_n,S_n\},$
with $h(L_n)=l_n$ and $h(S_n)=s_n$.  We divide $L_n$ into two columns, say $L_n^0$ and $L_n^1$, where $w(L_n^0)=\alpha^{n+1}$ and $w(L_n^1)=\alpha^{n+2}$, and stack $S_n$ over $L_n^0$, obtaining 
$$C_{n+1}=\{L_{n+1},S_{n+1}\},$$ where
\begin{equation*}L_{n+1}=L_n^0*S_n \qquad {\rm and}\qquad S_{n+1}=L_n^1\ ,\end{equation*} with
$$w(L_{n+1})=\alpha^{n+1}\ ,\quad h(L_{n+1})=l_{n+1}\ , \quad b(L_{n+1})=[0,\alpha^{n+1}[$$ and
$$w(S_{n+1})=\alpha^{n+2}\ , \quad h(S_{n+1})=s_{n+1}\ ,\quad b(C_{n+1})=[\alpha^{n+1},\alpha^n[\ .$$
}
\end{definizione}

\bigskip

The first steps of the procedure are visualized in Figure 2.\\

To the above cutting-stacking construction it is naturally associated an interval exchange, whose explicit expression is given by the following result.

\begin{proposizione}\label{9}
The interval exchange corresponding to the cutting-stacking procedure described in Definition \ref{8} is the map $T:[0,1[\rightarrow [0,1[$ whose restriction to $ I_{k}$ is  $T_k$, where
 $$T_1(x)=x+\alpha \quad {\rm if\ }\ x \in I_1=[0,\alpha^{2}[ \,$$
and, for every $k\geq 1$,
$$T_{2k}(x)=x+\alpha^{2k}-\sum_{j=0}^{k-1}\alpha^{2j+1} \quad {\rm if}\ x \in I_{2k}=\left[\sum_{j=0}^{k-1}\alpha^{2j+1}, \sum_{j=0}^{k}\alpha^{2j+1}\right[$$
and
$$T_{2k+1}(x)=x+\alpha^{2k+1}-\sum_{j=0}^{k-1}\alpha^{2(j+1)} \quad {\rm if}\ x \in I_{2k+1}= \left[\sum_{j=0}^{k-1}\alpha^{2(j+1)}, \sum_{j=0}^{k}\alpha^{2(j+1)}\right[.$$
\end{proposizione}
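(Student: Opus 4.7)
The plan is to derive the explicit formulas by unpacking the cutting-stacking procedure inductively. At each step $n+1$, the only piece of $T$ that is newly defined is the translation sending the top level of $L_n^0$ onto the bottom level of $S_n$ (which is then stacked on top of $L_n^0$ to form $L_{n+1}$); every translation already defined on a non-top level at an earlier stage is preserved, although it may subsequently be subdivided into narrower sub-intervals. So $T$ decomposes, modulo a null set, as the disjoint union of these countably many new pieces, one per index $n\ge 1$, and it is enough to identify each such piece explicitly.

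The first step is to locate the bottom of $S_n$ and the top of $L_n^0$ in $[0,1[$. From $S_{n+1}=L_n^1$, and from the fact that $L_n^1$ is the right-width-$\alpha^{n+2}$ slice of $L_n$ (whose base is $[0,\alpha^n[$), one reads off that the bottom of $S_n$ equals $[\alpha^n,\alpha^{n-1}[$ for every $n\ge 1$. The top of $L_n$ is subtler, and I would compute it by induction on $n$, using $L_{n+1}=L_n^0 * S_n$ to deduce that the top of $L_{n+1}$ coincides with the top of $S_n=L_{n-1}^1$, itself the right-width-$\alpha^{n+1}$ slice of the top of $L_{n-1}$. Separating parities, this gives
\[
\mathrm{top}(L_{2k})=[1-\alpha^{2k},1[, \qquad \mathrm{top}(L_{2k+1})=[\alpha-\alpha^{2k+1},\alpha[,
\]
and the top of $L_n^0$ is then the left-width-$\alpha^{n+1}$ slice of the above.

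Using $\alpha+\alpha^2=1$ repeatedly, one obtains $1-\alpha^{2k}=\sum_{j=0}^{k-1}\alpha^{2j+1}$ and $\alpha-\alpha^{2k+1}=\sum_{j=0}^{k-1}\alpha^{2(j+1)}$, so the top of $L_n^0$ is exactly the interval $I_n$ from the statement (with $T_1$ arising as the $k=0$ case of $T_{2k+1}$). The translation constant is forced to be the difference between the left endpoint $\alpha^n$ of the bottom of $S_n$ and the left endpoint of $I_n$, which is precisely the expression appearing in $T_{2k}$ and $T_{2k+1}$. Finally, $|I_n|=\alpha^{n+1}$ yields $\sum_{n\ge 1}|I_n|=\alpha^2/(1-\alpha)=1$; together with the geometric fact that the $I_n$'s lie on distinct levels of the limiting tower (hence are pairwise disjoint, as are their translates $T_n(I_n)$), this verifies conditions (i) and (iii) of Definition \ref{7}.

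I expect the main obstacle to be the inductive bookkeeping for the top of $L_n$: the recursion peers two steps back via $L_{n-1}^1$, and the location of this top oscillates between a neighborhood of $1$ and a neighborhood of $\alpha$ according to the parity of $n$, so the induction must be carried out for both parities in tandem.
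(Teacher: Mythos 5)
Your proposal is correct and follows essentially the same route as the paper: both arguments come down to identifying $I_k$ with $top(L_k^0)$ and its translate $[\alpha^k,\alpha^{k-1}[\ =[\alpha^k,\alpha^k+\alpha^{k+1}[$ with $b(S_k)$, and then reading off the interval-exchange axioms from the resulting tilings of $[0,1[$. The only difference is one of emphasis: you derive $top(L_n^0)=I_n$ by an explicit two-steps-back induction on the location of $top(L_n)$, whereas the paper obtains the same identification more tersely by computing $T_n(I_n)=b(S_n)$ and invoking the bijectivity of the translation.
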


\begin{proof} 
If we write $T(x)=x+c_k$ whenever  $x \in I_k$ for any $k \ge 1$, we simply observe that $I_k+c_k=[\alpha^k, \alpha^k+\alpha^{k+1} [ $ for all $k \ge 1$.  Therefore, $\lambda(\cup_{k\geq 0}(I_k+c_k))=1$ and $(I_h+c_h)\cap (I_k+c_k) =\emptyset $ whenever $h \neq k$, which proves that $T$ is an interval exchange.

Now we show how the map $T$ acts in the cutting-stacking procedure, proving that in each column $C_n=\{L_n, S_n \}$ the transformation $T$ maps each interval of $L_n$ (respectively, $S_n$) onto the interval above it and the {\it top} interval of $L_n^0$, denoted as usual by $top(L_n^0)$, onto the bottom interval of $S_n$. 

 Let us start with $C_1=\{L_1, S_1 \}$. When we divide the column $L_1$ into two sub-columns $L^0_1$ and $L_1^1$, made by one interval each, we notice that $top(L^0_1)=I_1$ and $b(S_1)=T(I_1)$. Therefore, we stack $S_1$ onto $L_1^0$ using $T_1$, which maps $I_1$ onto $b(S_1)$ and gives birth to the columns $C_2=\{ L_2, S_2\}$, where $L_2=L_1^0*S_1$ and $S_2=L_1^1$.

Now we consider $n=2k$ and $C_{2k}=\{ L_{2k}, S_{2k}\}$. When we divide $L_{2k}$ proportionally to $\alpha$ and $\alpha^2$, obtaining therefore $L_{2k}^0$ and $L_{2k}^1$, we notice that $b(S_{2k})=T_{2k}(I_{2k})=[\alpha^{2k}, \alpha^{2k+1}[$ and, consequently, $top(L_{2k}^0)=I_{2k}$ because $T_{2k}$ is a bijection. In other words, $T_{2k}$ stacks the bottom of $S_{2k}$ onto the top of $L_{2k}^0$ because $T_{2k}(top(L_{2k}^0))=b(S_{2k})=[\alpha^{2k}, \alpha^{2k-1}[$\ .

A simple calculation shows that if we consider the case $n={2k+1}$ we have  $T_{2k+1}(I_{2k+1})=b(S_{2k+1})=[\alpha^{2k+1}, \alpha^{2k}[$ and, therefore,  $top(L_{2k+1}^0)=I_{2k+1}$. 

The proposition is now completely proved. \end{proof}

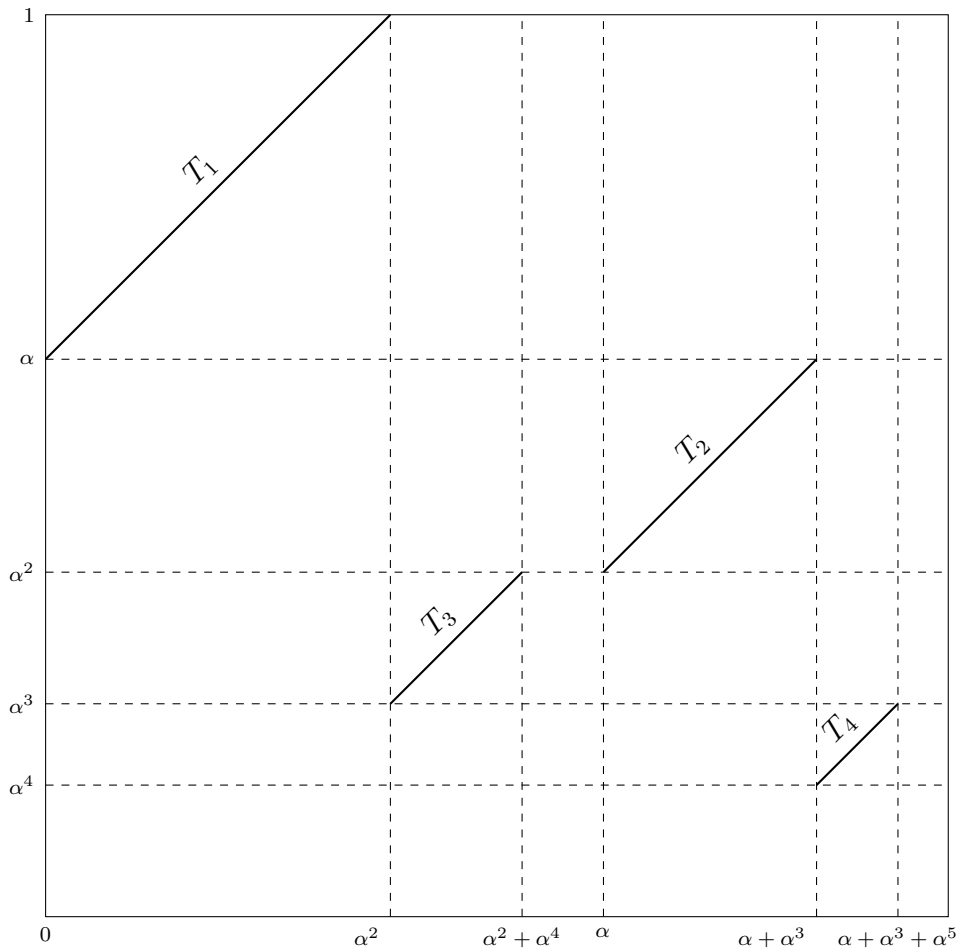
\begin{figure}[h!]
\begin{center}
\begin{tikzpicture}[scale=12]
\draw (0,0) node[below]{\scriptsize 0} -- (1,0) -- (1,1) -- (0,1)node[left]{\scriptsize1} -- (0,0);
\draw [thick] (0,0.61803) node[left]{\scriptsize $\alpha$} -- (0.38196,1)
node[midway, sloped,above] {$T_1$};;
\draw [thick] (0.38196,0.23606) -- (0.52786,0.38196)
node[midway, sloped,above] {$T_3$};;

\draw [thick] (0.61803,0.38198) -- (0.85410,0.61803)
node[midway, sloped,above] {$T_2$};
\draw [thick] (0.85410,0.14589) -- (0.94427,0.23606)
node[midway, sloped,above] {$T_4$};;
\draw [dashed] (0.61803,0) node[below]{\scriptsize $\alpha^{}$} -- (0.61803,1); 
\draw [dashed] (0.38196,0) node[below left]{\scriptsize $\alpha^{2}$} -- (0.38196,1);
\draw [dashed] (0.52786,0) node[below]{\scriptsize $\alpha^{2}+\alpha^{4}$} -- (0.52786,1);
\draw [dashed] (0.85410,0) node[below left]{\scriptsize $\alpha+\alpha^{3}$} -- (0.85410,1);
\draw [dashed] (0.94427,0) node[right, below]{\scriptsize $\alpha+\alpha^{3}+\alpha^{5}$} -- (0.94427,1);
\draw [dashed] (0,0.61803) -- (1,0.61803);
\draw [dashed] (0,0.38196)  node[left]{\scriptsize $\alpha^{2}$} -- (1,0.38196);
\draw [dashed] (0,0.23606) node[left]{\scriptsize $\alpha^{3}$}  -- (1,0.23606);
\draw [dashed] (0,0.14589) node[left]{\scriptsize $\alpha^{4}$}  -- (1,0.14589);
\end{tikzpicture}
\end{center}
\caption{Partial graph of the Kakutani-Fibonacci transformation $T$}
\end{figure}

\begin{definizione}\label{10}
{\rm The map $T:[0,1[\rightarrow [0,1[$, whose expression is given in Proposition \ref{9}, is called the \emph{Kakutani-Fibonacci transformation}. }
\end{definizione}
Figure 3 shows the graph of the maps $T_k$, with $1 \leq k\leq 4$.

Note that in \cite{grahellia} it is presented a transformation $T_{f}$, called  the Fibonacci transformation, related to the translation $x \rightarrow x+\alpha (mod\ 1)$ by means of the cutting-stacking technique. At each step of the cutting-stacking procedure, the two columns obtained by means of $T_{f}$ have the same height and  width as the two columns obtained by means of the Kakutani-Fibonacci transformation $T$, but the bottoms are different as well as the order of the intervals in each column. Moreover, $T_{f}$ is  not defined in $0$.

\begin{teorema}\label{ergodicity}
The Kakutani-Fibonacci transformation $T$ is ergodic.
\end{teorema}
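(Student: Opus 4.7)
My plan is to show that any measurable set $A \subset [0,1[$ with $T^{-1}(A)=A$ satisfies $\lambda(A) \in \{0,1\}$. The cutting-stacking structure of Definition \ref{8}, combined with the description of $T$ inside each column in the proof of Proposition \ref{9}, will translate $T$-invariance into a linear recurrence on column-densities whose only bounded solutions are constants; a standard Lebesgue density argument then forces the constant to be $0$ or $1$.

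First I observe that inside each column $L_n$ (respectively $S_n$), the map $T$ sends each interval to the one directly above it, so the $l_n$ (resp.\ $s_n$) intervals of the column are successive $T$-iterates of the bottom interval. Because $T$ preserves $\lambda$ and $A$ is $T$-invariant, for any two intervals $J,J'$ of the same column one has $T^k(J)=J'$ for some $k\in\mathbb{Z}$, whence $\lambda(A\cap J)/\lambda(J)=\lambda(A\cap J')/\lambda(J')$. Call these common values $d_L^{(n)}$ and $d_S^{(n)}$; they satisfy $d_L^{(n)}\lambda(L_n) + d_S^{(n)}\lambda(S_n) = \lambda(A)$.

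Next I would read off two identities from the refinement rule $L_{n+1} = L_n^0 * S_n$ and $S_{n+1} = L_n^1$. Each interval of $S_n$ has length $\alpha^{n+1}$ and appears as an interval of $L_{n+1}$, so $d_S^{(n)} = d_L^{(n+1)}$. Each long interval $I \subset L_n$ of length $\alpha^n$ splits into a left piece of length $\alpha^{n+1}$ inside $L_{n+1}$ and a right piece of length $\alpha^{n+2}$ inside $S_{n+1}$, giving $d_L^{(n)} = \alpha\, d_L^{(n+1)} + \alpha^2\, d_S^{(n+1)}$. Eliminating $d_S$ yields the second-order recurrence $d_L^{(n)} = \alpha\, d_L^{(n+1)} + \alpha^2\, d_L^{(n+2)}$. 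Using $1-\alpha = \alpha^2$, the increments $e_n := d_L^{(n+1)} - d_L^{(n)}$ satisfy $e_{n+1} = -\alpha^{-2} e_n$, so $|e_n| = |e_0|\alpha^{-2n}$; since $|e_n| \le 1$, this forces $e_0 = 0$, hence $e_n \equiv 0$. Thus $d_L^{(n)}$ is a constant, and by the identity of the previous paragraph it must equal $\lambda(A)$; the same holds for $d_S^{(n)} = d_L^{(n+1)}$. In other words, the density of $A$ on every atom of every partition $\alpha^n\omega$ is identically $\lambda(A)$.

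To finish, the partitions $\{\alpha^n\omega\}$ refine and have mesh $\alpha^n \to 0$, so the Lebesgue differentiation theorem (equivalently the martingale convergence theorem along the filtration they generate) gives $\lambda(A \cap I_n(x))/\lambda(I_n(x)) \to \chi_A(x)$ for almost every $x$, where $I_n(x)$ denotes the interval of $\alpha^n\omega$ containing $x$. But this ratio is identically $\lambda(A)$ by the previous paragraph, so $\chi_A = \lambda(A)$ a.e., forcing $\lambda(A) \in \{0,1\}$. The main obstacle is the middle step: correctly tracking which parts of $L_n$ and $S_n$ land in $L_{n+1}$ versus $S_{n+1}$, and then noticing that the non-unit characteristic root of the resulting recurrence has modulus $\alpha^{-2}>1$, which is precisely the instability that collapses the admissible density sequence to a constant and thus forces ergodicity.
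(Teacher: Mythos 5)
Your proof is correct, but it takes a genuinely different route from the paper's. The paper runs the classical Friedman-style density-point argument: given an invariant set $B$ of positive measure it picks a Lebesgue density point, finds a level $m$ at which some interval $I$ of the stack $L_m*S_m$ satisfies $\lambda(B\cap I)>(1-\epsilon)\lambda(I)$, and then sweeps $I$ through the whole stack by the tower maps to conclude $\lambda(B)>1-\epsilon$. You instead turn invariance into exact algebra first: since the intervals of each column are successive $T$-translates of the bottom one, an invariant set has a single density $d_L^{(n)}$ on $L_n$ and $d_S^{(n)}$ on $S_n$; the refinement rule $L_{n+1}=L_n^0*S_n$, $S_{n+1}=L_n^1$ gives $d_S^{(n)}=d_L^{(n+1)}$ and $d_L^{(n)}=\alpha\, d_L^{(n+1)}+\alpha^2\, d_S^{(n+1)}$, and the resulting recurrence has characteristic roots $1$ and $-\alpha^{-2}$, so boundedness of the densities kills the non-constant mode; martingale convergence (or Lebesgue differentiation) along the refining partitions then forces $\chi_A=\lambda(A)$ a.e. All the individual steps check out: the relation $e_{n+1}=-\alpha^{-2}e_n$ does follow from $\alpha+\alpha^2=1$, and your identification of which pieces of the long intervals land in $L_{n+1}$ versus $S_{n+1}$ matches Definition \ref{8}. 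The trade-off is that the paper's argument is the generic one for cutting-stacking constructions and would survive any change in the combinatorics, whereas yours exploits the specific two-column self-similarity and the golden-ratio identity; in exchange it is arguably more elementary (the density theorem is invoked only softly at the very end rather than to locate a good starting interval) and it proves the stronger intermediate fact that an invariant set has density exactly $\lambda(A)$ on every atom of every partition $\alpha^n\omega$.
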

\begin{proof} The arguments we use are inspired by  \cite{Friedman}.

Let us denote by $C_m^*$ the {\it stack} made by  $L_m*S_m$ and let us define the mapping $\tau_m: [0,1[\rightarrow [0,1[$ as follows.
For $1 \le i \le l_m-1$ it is the translation of the interval $J_i$ of $L_m$ onto $J_{i+1}$, the interval just above $J_i$, and hence it coincides with $T$.
If $i=l_m$,  it is a contraction by parameter $\alpha$ of $top(L_m)$ onto $b(S_m)$.
For $1 \le i \le s_m-1$ it is the translation of the interval $\tilde J_i$ of $S_m$ onto $\tilde{J}_{i+1}$, the interval just above $\tilde J_i$, and hence it coincides with $T$ again.

Now we fix a measurable set $B$ in $[0,1[$ with $\lambda(B)>0$, such that $T^{-1}(B)=B$, and we prove that  $\lambda(B^T)=1$, where $B^T=\bigcup_{i=- \infty}^{+ \infty} T^i(B)$.

From Lebesgue density theorem, there exists a point $x_0$ having density $1$ for $B$. 
This implies that for every fixed $\epsilon>0$ there exists $\delta>0$ such that if $I$ is any interval containing $x_0$ with $\lambda(I)< \delta$, we have

\begin{equation}\label{2}\lambda(B \cap I)>(1-\epsilon) \lambda(I).
\end{equation}

\smallskip
Due to the fact that the diameter of the Kakutani-Fibonacci sequence of partition $(\alpha^m \omega)_{m \ge 1}$ tends to $0$ when $m \rightarrow \infty$, there exists $m >0$ such that $\alpha^m< \delta$  and, therefore, there exists an interval of $C^*_m$ for which (\ref{2}) holds. For sake of brevity, we will denote it by $I$.
 
As $C^*_m= L_m* S_m$, the interval $I$ could belong to the column $L_m$ or to the column $S_m$. If $I \in S_m$, it is equivalent to say that $I \in {L_{m+1}}$. For this reason, without any loss of generality we can suppose that $I \in L_m$.

Suppose now that $I$ is the $r$-th interval from below in the column $L_m$ and observe that 
\begin{equation*} \lambda \left (\bigcup_{i=-r+1}^{t_m-r} \tau_m^i(I) \right)=\sum_{i=-r+1}^{t_m-r}\lambda  (I) =1.  
\end{equation*}

\smallskip
Taking the above identity and (\ref{2}) into account, we have the following inequalities:
\begin{eqnarray*}\lambda(B^T)&\ge&\lambda \left((B \cap I)^T\right)  \\
&\ge&\lambda \left(  \bigcup_{i=1-r}^{l_m-r} T^i(B \cap I) \right)+\lambda \left(  \bigcup_{j=0}^{s_m-1} \left( T^j\left (B \cap b(S_m)\right)\right) \right)\\
&=&\lambda \left(  \bigcup_{i=1-r}^{t_m-r}  \tau_m^i(B \cap I) \right)+\lambda \left(  \bigcup_{j=l_m+1-r}^{t_m-r}  \tau_m^j(B \cap I) \right)\\
&=& \sum_{i=1-r}^{l_m-r} \lambda \left( \tau_m^i(B \cap I)\right)+\sum_{j=l_m+1-r}^{t_m-r} \lambda \left( \tau_m^j(B \cap I)\right)\\
&>&(1-\epsilon)\ l_m\ \alpha^{m}+(1-\epsilon)\ s_m \ \alpha^{m+1}=1-\epsilon.
\end{eqnarray*}
As $\epsilon$ is arbitrary, we conclude that $\lambda(B)=1$. Therefore, $T$ is ergodic and the theorem is proved.
\end{proof}

A direct consequence of the above theorem and Theorem \ref{B} is the following

\begin{teorema}\label{b-k-f} The sequence $\{T^{n}(x)\}_{n\geq 0}$ is u.d. for almost every $x \in [0,1[$.
\end{teorema}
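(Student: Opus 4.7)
The plan is to deduce the result directly from the ergodicity of $T$ proved in Theorem \ref{ergodicity} together with Birkhoff's Theorem \ref{B} and the integral characterization of uniform distribution of Theorem \ref{4}. Since $T$ is an interval exchange, it automatically preserves Lebesgue measure $\lambda$ on $[0,1[$ (as noted just after Definition \ref{7}), so $([0,1[,\lambda,T)$ is an ergodic measure-preserving system and all hypotheses of Birkhoff's Theorem are satisfied.

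First I would apply Birkhoff's Theorem to each individual continuous function $f$ on $[0,1]$: this yields a set $E_f \subseteq [0,1[$ with $\lambda(E_f)=1$ on which
$$\lim_{N\to\infty}\frac{1}{N}\sum_{j=0}^{N-1} f(T^{j}x) = \int_{0}^{1} f(t)\,dt.$$
The subtle point is that the exceptional null set depends on $f$, whereas to invoke Theorem \ref{4} and conclude uniform distribution I need a \emph{single} full-measure set that works for every continuous function simultaneously.

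This obstacle is handled in a standard way. Fix a countable subset $\{f_k\}_{k\ge 1}$ of $C([0,1])$ that is dense in the supremum norm (for instance, polynomials with rational coefficients, whose density follows from the Stone-Weierstrass theorem). Set $E=\bigcap_{k\ge 1} E_{f_k}$; being a countable intersection of full-measure sets, $\lambda(E)=1$. For any $x\in E$ and any $f\in C([0,1])$, given $\varepsilon>0$ choose $k$ with $\|f-f_k\|_\infty<\varepsilon/3$. Then
$$\left|\frac{1}{N}\sum_{j=0}^{N-1} f(T^{j}x) - \frac{1}{N}\sum_{j=0}^{N-1} f_k(T^{j}x)\right| \le \|f-f_k\|_\infty < \frac{\varepsilon}{3}$$
for every $N$, and $\left|\int_0^1 f - \int_0^1 f_k\right| < \varepsilon/3$, while for $N$ large the Birkhoff average of $f_k$ along the orbit of $x$ lies within $\varepsilon/3$ of $\int_0^1 f_k$ because $x\in E_{f_k}$. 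The triangle inequality then gives
$$\lim_{N\to\infty}\frac{1}{N}\sum_{j=0}^{N-1} f(T^{j}x) = \int_{0}^{1} f(t)\,dt$$
for every continuous $f$. By Theorem \ref{4}, the orbit $\{T^{j}x\}_{j\ge 0}$ is uniformly distributed for every $x\in E$, and $E$ has full measure.

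The only real obstacle is this approximation step, which is needed to exchange the order of quantifiers ``for every $f$'' and ``for almost every $x$''. Apart from that, the theorem is essentially an immediate reading of Birkhoff's Theorem in the ergodic case, as already anticipated by the remark following Theorem \ref{B}.
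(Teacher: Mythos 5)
Your proof is correct and follows exactly the route the paper intends: the paper states this theorem as a direct consequence of Theorem \ref{ergodicity} and Birkhoff's Theorem \ref{B} and gives no further argument. The countable-dense-family step you supply to obtain a single full-measure set valid for all continuous $f$ is the standard detail the paper leaves implicit, and you have carried it out correctly.
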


We conclude this paper showing how to get the points of the Kakutani-Fibonacci sequence by means of the  Kakutani-Fibonacci transformation.

\begin{teorema}\label{15}
The Kakutani-Fibonacci sequence of points  $\{\xi_n\}_{n \ge 1}$ coincides with $\{T^{n}(0)\}_{n\geq 0}$.
\end{teorema}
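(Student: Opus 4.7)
The plan is to proceed by induction on $n \ge 1$, proving simultaneously the two statements: (a) the left endpoints of the intervals of $L_n$ read from bottom to top are $\xi_1, \ldots, \xi_{l_n}$, and those of $S_n$ from bottom to top are $\xi_{l_n+1}, \ldots, \xi_{t_n}$; and (b) $T^{j-1}(0) = \xi_j$ for every $1 \le j \le t_n$. Since $t_n \to \infty$, taking the union over $n$ will yield $\{T^{n}(0)\}_{n\ge 0} = \{\xi_n\}_{n \ge 1}$. The base case $n = 1$ is immediate: $L_1 = [0,\alpha[$ and $S_1 = [\alpha, 1[$ have left endpoints $0 = \xi_1$ and $\alpha = \xi_2$, and $T(0) = \alpha$ by the formula for $T_1$ in Proposition \ref{9}.

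For the inductive step I would exploit the structural identities $L_{n+1} = L_n^0 * S_n$ and $S_{n+1} = L_n^1$ from Definition \ref{8}, together with the Fibonacci-type relations $l_{n+1} = t_n$ and $s_{n+1} = l_n$. Since $L_n^0$ is the left $\alpha$-fraction of each interval of $L_n$, its intervals share the same left endpoints $\xi_1, \ldots, \xi_{l_n}$ as $L_n$; stacking $S_n$ on top then produces, from bottom to top, the ordered list $\xi_1, \ldots, \xi_{l_n}, \xi_{l_n+1}, \ldots, \xi_{t_n} = \xi_{l_{n+1}}$ for $L_{n+1}$, which is (a) for $L_{n+1}$. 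For $S_{n+1} = L_n^1$, each interval is the right $\alpha^2$-fraction of an interval of $L_n$ and therefore has left endpoint $\xi_j + \alpha^{n+1}$ for $1 \le j \le l_n$; by the recursion defining $\Lambda_{n+1}$ one has $\xi_{t_n + j} = \phi_{n+1}(\xi_j) = \xi_j + \alpha^{n+1}$, so these endpoints are precisely $\xi_{l_{n+1}+1}, \ldots, \xi_{t_{n+1}}$, giving (a) for $S_{n+1}$.

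Statement (b) at step $n+1$ then follows directly from Proposition \ref{9}: within $L_{n+1}$ (resp.\ $S_{n+1}$), $T$ translates each interval onto the one immediately above, and $T$ sends $top(L_{n+1}^0)$ onto $b(S_{n+1})$. Since the left endpoint of $top(L_{n+1}^0)$ coincides with that of $top(L_{n+1})$, the orbit starting at $0 \in b(L_{n+1})$ first climbs through $L_{n+1}$, yielding $T^{j-1}(0) = \xi_j$ for $1 \le j \le l_{n+1}$, then jumps to the left endpoint $\xi_{l_{n+1}+1}$ of $b(S_{n+1})$, and finally climbs through $S_{n+1}$ to exhaust $\xi_{l_{n+1}+2}, \ldots, \xi_{t_{n+1}}$. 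The main obstacle is purely bookkeeping: keeping straight the identifications $l_{n+1} = t_n$, $s_{n+1} = l_n$, $t_{n+1} = t_n + l_n$ that align the index structure of the recursion for $\Lambda_{n+1}$ with the column sizes, and verifying that the shift $\phi_{n+1}(x) = x + \alpha^{n+1}$ is exactly the horizontal translation carrying the left endpoints of $L_n^0$ to those of $L_n^1 = S_{n+1}$.
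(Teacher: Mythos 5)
Your proposal is correct and follows essentially the same route as the paper: an induction on $n$ that uses the recursion $\Lambda_{n+1}=(\xi_1,\dots,\xi_{t_n},\phi_{n+1}(\xi_1),\dots,\phi_{n+1}(\xi_{l_n}))$ together with the column identities $L_{n+1}=L_n^0*S_n$, $S_{n+1}=L_n^1$ and the description of $T$ from Proposition \ref{9} to match the orbit of $0$ with the left endpoints read bottom-to-top through the stack. The paper phrases the key step by writing each interval of $L_{n+1}$ and $S_{n+1}$ as a $T$-iterate of $[0,\alpha^{n+1}[$, while you track the left endpoints directly, but this is only a difference in bookkeeping, not in substance.
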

\begin{proof}  We want to show  by induction on $n\ge 1$ that  

\begin{equation}\label{3} \big( \xi_1, \ \xi_2, \dots,\ \xi_{t_n})=\big(0, T(0), T^2(0), \dots, T^{t_{n}-1}(0)\big).
\end{equation}

\bigskip
 If $n=1$,  (\ref{3}) is obviously verified.

We suppose that (\ref{3}) is true for  $n$ and prove that
\begin{eqnarray*} &&\big(\ \xi_1, \ \xi_2, \dots,\ \xi_{t_n}, \phi_{n+1}(\xi_1),\dots ,\phi_{n+1}(\xi_{l_n}))\\
=&&\big(0, T(0), T^2(0), \dots, T^{t_{n}-1}(0), T^{t_n}(0), T^{t_n+1}(0), \dots, T^{t_{n}+l_n-1}(0)\big).
\end{eqnarray*}

Due to (\ref{1}) and the inductive assumption, it is sufficient to prove that
 \begin{eqnarray*}&&\big(\phi_{n+1}(0),\phi_{n+1}(T(0)), \dots ,\phi_{n+1}(T^{t_n-1}(0))\big)\\
= &&\big(T^{t_n}(0), T^{t_n+1}(0), \dots, T^{t_{n}+l_n-1}(0)\big).\end{eqnarray*}

As $\phi_{n+1}(x)=x+\alpha^{n+1}$, the above identity  is equivalent to  
 \begin{equation*}\big(\alpha^{(n+1)},T(0)+\alpha^{(n+1)}, \dots ,T^{t_n-1}(0)+\alpha^{(n+1)}\big)\end{equation*}
\begin{equation}\label{4}=\big(T^{t_n}(0), T^{t_n+1}(0), \dots, T^{t_{n}+l_n-1}(0)\big).\end{equation}

\bigskip
In order to prove (\ref{4}), we focus our attention on the intervals of the column $S_{n+1}$ and on their left endpoints.

We note that, due to the cutting-stacking procedure described in Definition \ref{8} and to the nature of $T$ described in the proof of Proposition \ref{9}, and specifically to the fact that  $b(S_{n+1})=T(top(L_{n+1}))=T(T^{l_{n}+s_n-1}(\ [0, \alpha^{n+1}[\ ) )$,  
the columns $C_{n+1}=\{L_{n+1}, S_{n+1} \}$ can be written as follows:
\begin{eqnarray*} L_{n+1}&&=L_n^0*S_n=\Big(b(L_n^0)* \cdots *top(L_n^0)\Big)*\Big(b(S_n)*\cdots*top(S_n)\Big)\\
&&=\Big([0, \alpha^{n+1}[ \ * \cdots *\ T^{l_{n}-1}(\ [0, \alpha^{n+1}[\ )\ \Big)*\\
  &&\ * \ \Big( T^{l_{n}}(\ [0, \alpha^{n+1}[\ )\ *\cdots\ *\ T^{l_{n}+s_n-1}(\ [0, \alpha^{n+1}[\ ) \Big)
 \end{eqnarray*}
and
 \begin{eqnarray*} S_{n+1}=L_n^1&&=b(L_n^1)* \cdots *top(L_n^1)\\
 &&=\Big([\alpha^{n+1}, \alpha^{n}[\ * \cdots *\ T^{l_{n}-1}\left (\  [\alpha^{n+1}, \alpha^{n}[\ \right  ) \Big)\\
&&= \Big( T^{t_{n}}(\ [0, \alpha^{n+1}[\ )\ *\cdots\ *\ T^{t_{n}+l_n-1} \left(\ [0, \alpha^{n+1}[\ \right) \Big).
 \end{eqnarray*}

 \bigskip
 Therefore, as $T$ is an interval exchange, the $t_n$ left endpoints of  $L_{n+1}$ are $0, T(0), \cdots, T^{t_n-1}(0) $ and the left endpoints of $S_{n+1}$, which are a right shift by the constant $\alpha^{n+1}$ of the left endpoints of $L_{n+1}$, are $T^{t_n}(0), T^{t_n+1}(0),$ $ \cdots, T^{t_n+s_n-1}(0)$, which proves (\ref{4}).
 
The theorem is now completely proved. \end{proof}

\noindent {\bf Acknowledgements}
The authors wish to  express their thanks to Robert Tichy for the stimulating discussion which took place in Graz.

\bigskip
\noindent {\it Ingrid Carbone, Maria Rita Iac\`{o}*, Aljo\v{s}a Vol\v{c}i\v{c} }\\ 
University of Calabria, Department of Mathematics\\ 
Ponte P. Bucci Cubo 30B\\
87036 Arcavacata di Rende (Cosenza), Italy\\
E-mail: \texttt{i.carbone@unical.it}} \\
E-mail: \texttt{volcic@unical.it}\\
E-mail: \texttt{iaco@mat.unical.it}

\bigskip
\noindent * 
  Research supported by the Doctoral Fellowship in Mathematics and Informatics of University of Calabria in cotutelle with Graz University of Technology, Institute of Mathematics A, Steyrergasse 30, 8010 Graz, Austria. 


\begin{thebibliography}{}

\bibitem {Aistleitner_Hofer} C. Aistleitner, M. Hofer, Uniform distribution of generalized Kakutani's sequences of partitions. {\it Ann. Mat. Pura Appl.}, DOI: 10.1007/s10231-011-0235-9 (2011).


\bibitem  {Carbone} I. Carbone,  Discrepancy of $LS$-sequences of  partitions and points.  {\it Ann. Mat. Pura Appl.}, DOI: 10.1007/s10231-011-0208 (2011).

\bibitem  {Carbone2} I. Carbone,  A van der Corput-type algorithm for $LS$-sequences of points.  {\it arXiv:1209.3611v1}, submitted (2012).

\bibitem{C-I-V} I. Carbone, M.R. Iac\`o, A. Vol\v{c}i\v{c}, $LS$-sequences of points in the unit square. {\it Preprint }(2012).

\bibitem {Carbone_Volcic} I. Carbone, A. Vol\v{c}i\v{c}, Kakutani splitting procedure in higher dimension. {\it Rend. Ist. Mat. Univ. Trieste} {\bf 39} (2007), 119-126 

\bibitem{Carbone_Volcic2} I. Carbone;  A. Vol\v{c}i\v{c}, A von Neumann theorem for uniformly distributed sequences of partitions. {\it Rend. Circ. Mat. Palermo (2)} {\bf 60} (2011), no. 1-2, 83-88.

\bibitem{Chersi-Volcic} F. Chersi,  A. Vol\v{c}i\v{c}, $\lambda$-equidistributed sequences of partitions and a theorem of the de Bruijn-Post type. {\it Ann. Mat. Pura Appl.} (4) {\bf 162} (1992), 23-32. 

\bibitem {Drmota_Infusino} M. Drmota, M. Infusino, On the discrepancy of some generalized Kakutani's sequences of partitions.{\it Unif. Distrib. Theory} {\bf 7} (2012), no. 1, 75-104.

\bibitem{Friedman} N. A. Friedman, {\it Introduction to ergodic theory}. Van Nostrand Reinhold Mathematical Studies {\bf 29}, Van Nostrand Reinhold Co., New York (1970).

\bibitem{Friedman2} N. A. Friedman, Replication and stacking in ergodic theory. {\it Amer. Math. Monthly} {\bf 99} (1992), no. 1, 31-41.

\bibitem{grahellia} P. Grabner, P. Hellekalek, P. Liardet, The dynamical point of view of low discrepancy sequences. {\it Unif. Distrib. Theory} {\bf 7}  (2012), no. 1, 11-70.

\bibitem {Kakutani} S. Kakutani, A problem on equidistribution on the unit interval $[0,1[$. {\it  Measure theory
(Proc. Conf., Oberwolfach, 1975)}, {\it Lecture Notes in Math.} {\bf 541}, Springer, Berlin, 369-375, 1976.

\bibitem{tesi} M.R. Iac\`o, $LS$-Successioni di punti nel quadrato. {\it Master Thesis}, Universit\`a della Calabria (2011).

\bibitem{I-V} M. Infusino, A. Vol\v{c}i\v{c},
Uniform distribution on fractals. {\it Unif. Distrib. Theory} {\bf  4} (2009), no. 2, 47-58. 

\bibitem {Lambert} J.P. Lambert, Quasi-Monte Carlo, low discrepancy sequences, and ergodic transformations, Proceedings of the international conference on computational and applied mathematics (Leuven, 1984).
 {\it J. Comput. Appl. Math.} {\bf 12}/{\bf 13}  (1985), 419-423.

\bibitem{Ninomiya} S. Ninomiya, Constructing a new class of low-discrepancy sequences by using the $\beta$-adic transformation, IMACS Seminar on Monte Carlo Methods (Brussels, 1997). 
{\it Math. Comput. Simulation} {\bf 47} (1998), no. 2-5, 403-418.

\bibitem {Volcic} A. Vol\v{c}i\v{c},  A generalization of Kakutani's splitting procedure. {\it Ann. Mat. Pura Appl.} (4) {\bf  
190} (2011), no. 1, 45-54.

\end{thebibliography}
\end{document}